\begin{document}
	\theoremstyle{plain}
	\newtheorem{thm}{Theorem}[section]
	\newtheorem*{thm1}{Theorem 1}
	\newtheorem*{thm2}{Theorem 2}
	\newtheorem{lemma}[thm]{Lemma}
	\newtheorem{lem}[thm]{Lemma}
	\newtheorem{cor}[thm]{Corollary}
	\newtheorem{prop}[thm]{Proposition}
	\newtheorem{propose}[thm]{Proposition}
	\newtheorem{variant}[thm]{Variant}
	\newtheorem{conjecture}[thm]{Conjecture}
	\theoremstyle{definition}
	\newtheorem{notations}[thm]{Notations}
	\newtheorem{corollary}[thm]{Corollary}
	\newtheorem{notation}[thm]{Notation}
	\newtheorem{rem}[thm]{Remark}  
	\newtheorem{rmk}[thm]{Remark}
	\newtheorem{rmks}[thm]{Remarks}
	\newtheorem{defn}[thm]{Definition}
	\newtheorem{ex}[thm]{Example}
	\newtheorem{claim}[thm]{Claim}
	\newtheorem{Ackn}[thm]{Acknowledgement} 
	\numberwithin{equation}{section}
	\newcounter{elno}
	\newcommand{\iu}{{i\mkern1mu}}
	\newcommand{\rank}{{\rm rank}} \newcommand{\Ker}{{\rm Ker \ }}
	\newcommand{\Pic}{{\rm Pic}} \newcommand{\Div}{{\rm Div}}
	\newcommand{\Hom}{{\rm Hom}} \newcommand{\im}{{\rm im}}
	\newcommand{\Spec}{{\rm Spec \,}} \newcommand{\Sing}{{\rm Sing}}
	\newcommand{\sing}{{\rm sing}} \newcommand{\reg}{{\rm reg}}
	\newcommand{\Char}{{\rm char}} \newcommand{\Tr}{{\rm Tr}}
	\newcommand{\Gal}{{\rm Gal}} \newcommand{\Min}{{\rm Min \ }}
	\newcommand{\Max}{{\rm Max \ }} \newcommand{\Alb}{{\rm Alb}\,}
	\newcommand{\GL}{{\rm GL}\,} % For the general linear group 
	\newcommand{\ie}{{\it i.e.\/},\ } \newcommand{\niso}{\not\cong}
	\newcommand{\nin}{\not\in}
	\newcommand{\soplus}[1]{\stackrel{#1}{\oplus}}
	\newcommand{\by}[1]{\stackrel{#1}{\rightarrow}}
	
	\newcommand{\longby}[1]{\stackrel{#1}{\longrightarrow}}
	\newcommand{\vlongby}[1]{\stackrel{#1}{\mbox{\large{$\longrightarrow$}}}}
	\newcommand{\ldownarrow}{\mbox{\Large{\Large{$\downarrow$}}}}
	\newcommand{\lsearrow}{\mbox{\Large{$\searrow$}}}
	\renewcommand{\d}{\stackrel{\mbox{\scriptsize{$\bullet$}}}{}}
	\newcommand{\dlog}{{\rm dlog}\,} % For dlog 
	\newcommand{\longto}{\longrightarrow}
	\newcommand{\vlongto}{\mbox{{\Large{$\longto$}}}}
	\newcommand{\limdir}[1]{{\displaystyle{\mathop{\rm lim}_{\buildrel\longrightarrow\over{#1}}}}\,}
	\newcommand{\liminv}[1]{{\displaystyle{\mathop{\rm lim}_{\buildrel\longleftarrow\over{#1}}}}\,}
	\newcommand{\norm}[1]{\mbox{$\parallel{#1}\parallel$}}
		\newcommand{\into}{\hookrightarrow} \newcommand{\image}{{\rm image}\,}
		\newcommand{\Lie}{{\rm Lie}\,} % For Lie algebra of groups 
		\newcommand{\CM}{\rm CM}
		\newcommand{\sext}{\mbox{${\mathcal E}xt\,$}} %For sheaf Ext 
		\newcommand{\shom}{\mbox{${\mathcal H}om\,$}} %For sheaf Hom 
		\newcommand{\coker}{{\rm coker}\,} % For the cokernel of a morphism 
		\newcommand{\sm}{{\rm sm}}
		\newcommand{\tensor}{\otimes}
		\renewcommand{\iff}{\mbox{ $\Longleftrightarrow$ }}
		\newcommand{\supp}{{\rm supp}\,}
		\newcommand{\ext}[1]{\stackrel{#1}{\wedge}}
		\newcommand{\onto}{\mbox{$\,\>>>\hspace{-.5cm}\to\hspace{.15cm}$}}
		\newcommand{\propsubset} {\mbox{$\textstyle{
					\subseteq_{\kern-5pt\raise-1pt\hbox{\mbox{\tiny{$/$}}}}}$}}
		%Skriptbuchstaben 
		\newcommand{\sA}{{\mathcal A}}
		\newcommand{\sB}{{\mathcal B}} \newcommand{\sC}{{\mathcal C}}
		\newcommand{\sD}{{\mathcal D}} \newcommand{\sE}{{\mathcal E}}
		\newcommand{\sF}{{\mathcal F}} \newcommand{\sG}{{\mathcal G}}
		\newcommand{\sH}{{\mathcal H}} \newcommand{\sI}{{\mathcal I}}
		\newcommand{\sJ}{{\mathcal J}} \newcommand{\sK}{{\mathcal K}}
		\newcommand{\sL}{{\mathcal L}} \newcommand{\sM}{{\mathcal M}}
		
		\newcommand{\sN}{{\mathcal N}} \newcommand{\sO}{{\mathcal O}}
		\newcommand{\sP}{{\mathcal P}} \newcommand{\sQ}{{\mathcal Q}}
		\newcommand{\sR}{{\mathcal R}} \newcommand{\sS}{{\mathcal S}}
		\newcommand{\sT}{{\mathcal T}} \newcommand{\sU}{{\mathcal U}}
		\newcommand{\sV}{{\mathcal V}} \newcommand{\sW}{{\mathcal W}}
		\newcommand{\sX}{{\mathcal X}} \newcommand{\sY}{{\mathcal Y}}
		\newcommand{\sZ}{{\mathcal Z}} \newcommand{\ccL}{\sL}
		% Sonderbuchstaben  mit Doppellinie
		\newcommand{\A}{{\mathbb A}} \newcommand{\B}{{\mathbb
				B}} \newcommand{\C}{{\mathbb C}} \newcommand{\D}{{\mathbb D}}
		\newcommand{\E}{{\mathbb E}} \newcommand{\F}{{\mathbb F}}
		\newcommand{\G}{{\mathbb G}} \newcommand{\HH}{{\mathbb H}}
		\newcommand{\I}{{\mathbb I}} \newcommand{\J}{{\mathbb J}}
		\newcommand{\M}{{\mathbb M}} \newcommand{\N}{{\mathbb N}}
		\renewcommand{\P}{{\mathbb P}} \newcommand{\Q}{{\mathbb Q}}
		
		\newcommand{\R}{{\mathbb R}} \newcommand{\T}{{\mathbb T}}
		\newcommand{\Z}{{\mathbb Z}}

		\title[HK density function of tensor product and Fourier transformation]{Hilbert-Kunz Density function of tensor product and Fourier transformation}

		\author[M. Mondal]{Mandira Mondal}
		\date{}
		\address{}
		\keywords{Hilbert-Kunz density function, tensor product, Fourier transformation, projective curve}
		\email{}

		\maketitle
		
		\begin{abstract}
			For a standard graded ring $R$ of dimension $\geq 2$ over a perfect field of characteristic $p>0$ and a homogeneous ideal $I$ of finite colength, the HK density function of $R$ with respect to $I$ is a compactly supported continuous function $f_{R, I}:[0, \infty)\longto [0, \infty)$, whose integration yields the \mbox{HK} multiplicity $e_{HK}(R, I)$.
			
			Here we answer a question of V. Trivedi about the Hilbert-Kunz density function of the tensor product of standard graded rings and show that it is the convolution of the Hilbert-Kunz density function of the factor rings.  Using Fourier transform, as a corollary we get \mbox{HK} multiplicity of the tensor product of rings is product of the HK multiplicity of the factor rings. We compute the Fourier transform of the \mbox{HK} density function of a projective curve. 
		\end{abstract}
		\section{Introduction}
		Let $R$ be a  Noetherian ring of prime characteristic
		$p >0$ and of dimension $d$ and let
		$I\subseteq R$ be an ideal of finite colength. The
		Hilbert-Kunz
		multiplicity of $R$ with
		respect to $I$ is defined as
		$$e_{HK}(R, I) = \lim_{n\to \infty}\frac{\ell_R(R/I^{[q]})}{q^{d}},$$
		where
		$q=p^n$, $I^{[q]}$ is the $n$-th Frobenius power of the ideal $I$, i.e., the ideal generated by $q$-th powers of the elements of $I$ and  $\ell_R(R/I^{[q]})$ denotes the
		length of
		the
		$R$-module $R/I^{[q]}$. Existence of the above limit was proved by
		Monsky \cite{Mo}. Hilbert-Kunz multiplicity has proved to be a more subtle invariant than
		the Hilbert-Samuel multiplicity and it often reflects the `characteristic $p$ property' of the
		ring. However, it has been difficult to compute as various 
		standard techniques used for studying Hilbert-Samuel multiplicities are not applicable 
		for $e_{HK}$.
		
		In order to study $e_{HK}$, 
		when  $R$ is a standard graded ring ($\dim~R\geq 2)$ over a perfect field
		of characteristic $p>0$ and $I$ is a homogeneous ideal of finite colength, Trivedi has introduced the notion of Hilbert-Kunz density function
		and showed its relation with the HK multiplicity \cite[Theorem 1.1]{Tri18}: 
		{\it the sequence of functions $\{f_n(R, I): [0,\infty) \longto 
			[0, \infty)\}_n$ given by 
			$$x\mapsto \frac{1}{q^{d-1}}\ell(R/I^{[q]})_{\lfloor qx\rfloor}$$ converges uniformly to a compactly supported continuous function 
			$f_{R, I}:[0,\infty) \longto 
			[0, \infty)$ such that 
			$$e_{HK}(R, I)=\int_0^\infty f_{R, I}(x) \ \mathrm{d}x.$$}
		\noindent The function $f_{R, I}$ is called the \mbox{HK} density function for the pair $(R, I)$.  Existence of a uniformly convergent sequence makes the density function a more refined invariant and a useful tool in the graded situation (compared to $e_{HK}$).  HK density function inherits many properties of the HK multiplicity.  In addition, this function is multiplicative on the Segre product of rings \cite[Proposition 1.2]{Tri18}, whereas the $e_{HK}$ may not display any such properties \cite[Example 3.7]{Tri18}. 
		In this paper we prove the following result about `multiplicative' property of  \mbox{HK} density function for tensor product of standard graded rings. Note that the \mbox{HK} density function can be considered as a continuous function on
		the real line by setting its value $0$ on $(-\infty, 0]$.
		\begin{thm}
			\label{thtensor}
			Let $\{R(i)\}_{i=1}^n$ be \textit{Noetherian} standard graded rings of dimension $\geq 2$ over a perfect field $\Bbbk$ of characteristic $p>0$. Let $I(i)$ be a homogeneous ideal of $R(i)$ of finite colength and let $I(1)\boxtimes \cdots \boxtimes I(n)$ be the homogeneous ideal $\sum_i R(1)\otimes\cdots R(i-1)\otimes I(i)\otimes R(i+1)\otimes\cdots R(n)$ of $\otimes_i R(i)$. Then
			$$f_{\otimes_i R(i), \boxtimes_i I(i)}=f_{R(1), I(1)}*\cdots*f_{R(n), I(n)},$$
			where for two integrable function $f, g:\R\longto \R$ we denote the convolution function of $f$ and $g$ by $f*g$.

		\end{thm}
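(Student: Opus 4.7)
I would reduce to the case of two rings by induction on $n$, using that $\otimes_{i=1}^n R(i) = (\otimes_{i=1}^{n-1} R(i)) \otimes_\Bbbk R(n)$ as standard graded $\Bbbk$-algebras, $\boxtimes_{i=1}^n I(i) = (\boxtimes_{i=1}^{n-1} I(i)) \boxtimes I(n)$ as ideals, and associativity of convolution. The substantive content is the two-ring case, which I would prove by writing the approximating function $f_n(S, J)(x)$ explicitly as a Riemann sum for the convolution integral.

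\textbf{Two-ring case.} Write $S = R(1) \otimes_\Bbbk R(2)$ and $J = I(1) \otimes R(2) + R(1) \otimes I(2)$. In characteristic $p$, $J^{[q]}$ is generated by the $q$-th powers of any generating set of $J$, so
\[
J^{[q]} = I(1)^{[q]} \otimes R(2) + R(1) \otimes I(2)^{[q]},
\qquad
S/J^{[q]} \cong R(1)/I(1)^{[q]} \otimes_\Bbbk R(2)/I(2)^{[q]}.
\]
Taking the $m$-th graded piece, and using that on a standard graded $\Bbbk$-algebra lengths of finite-length graded modules coincide with $\Bbbk$-dimensions,
\[
\ell_S\!\bigl((S/J^{[q]})_m\bigr) = \sum_{a+b=m} \ell_{R(1)}\!\bigl((R(1)/I(1)^{[q]})_a\bigr) \cdot \ell_{R(2)}\!\bigl((R(2)/I(2)^{[q]})_b\bigr).
\]
Set $d_i = \dim R(i)$ and $d = d_1 + d_2$. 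Dividing by $q^{d-1} = q \cdot q^{d_1 - 1} \cdot q^{d_2 - 1}$ with $q = p^n$ gives
\[
f_n(S, J)(x) = \frac{1}{q} \sum_{a=0}^{\lfloor qx \rfloor} f_n(R(1), I(1))(a/q) \cdot f_n(R(2), I(2))\!\bigl((\lfloor qx \rfloor - a)/q\bigr),
\]
which is a Riemann sum of mesh $1/q$ for $(f_{R(1), I(1)} * f_{R(2), I(2)})(x) = \int_0^x f_{R(1), I(1)}(t)\, f_{R(2), I(2)}(x - t)\,dt$.

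\textbf{Passage to the limit and main obstacle.} By Trivedi's theorem the sequences $f_n(R(i), I(i))$ converge uniformly on $[0, \infty)$ to compactly supported continuous limits $f_{R(i), I(i)}$; in particular all these functions are uniformly bounded, say by $M$, and supported in a common interval $[0, K]$. Let $\epsilon_n = \max_i \sup |f_n(R(i), I(i)) - f_{R(i), I(i)}|$; then $\epsilon_n \to 0$. The main technical obstacle is the uniform-in-$x$ control of the Riemann-sum error, because a naive estimate summing $O(q)$ terms with per-term error $O(\epsilon_n)$ would give aggregate error $O(q \epsilon_n)$, which need not vanish. The remedy exploits the multiplicative structure of the summand: at each point the error in replacing $f_n(R(1), I(1)) f_n(R(2), I(2))$ by $f_{R(1), I(1)} f_{R(2), I(2)}$ is at most $2M \epsilon_n + \epsilon_n^2$, and compact support bounds the number of nonzero summands by $qK + 1$, so the $1/q$ prefactor offsets the count and yields total error $O(\epsilon_n)$ uniformly in $x$. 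The residual comparison between the Riemann sum of the continuous compactly supported integrand $t \mapsto f_{R(1), I(1)}(t)\, f_{R(2), I(2)}(\lfloor qx \rfloor/q - t)$ and the convolution integral at $x$ is handled by uniform continuity of the limits together with $|\lfloor qx \rfloor/q - x| \le 1/q$. The general $n$ then follows by the induction sketched above.
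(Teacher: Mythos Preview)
Your proposal is correct and follows essentially the same route as the paper: reduce to $n=2$, identify the $m$-th graded length of the quotient as a discrete convolution (the paper proves this as a separate lemma, while you obtain it directly from the isomorphism $S/J^{[q]}\cong R(1)/I(1)^{[q]}\otimes_\Bbbk R(2)/I(2)^{[q]}$), and then pass to the limit using uniform convergence and uniform boundedness of the approximating functions. The only organizational difference is in the limit step: the paper rewrites the Riemann sum as an integral $\phi_n$ of step functions, checks $\phi_n(x_0)=f_n(R\otimes S,I\boxtimes J)(x_0)$ on the dense set of $p$-adic rationals, and concludes by continuity, whereas you treat all $x$ at once via uniform continuity of the limit functions---both arguments are valid and rely on the same ingredients.
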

		
		As a corollary,  using Fourier transform of an integrable function,  we get $$e_{HK}(\otimes_i R(i), \boxtimes_i I(i))=e_{HK}(R(1), I(1))\cdots e_{HK}(R(n), I(n)).$$ These results first appeared in the Ph.D. thesis \cite[Section 1.3]{M}
		of the author.

		In \cite{TT}, it is shown that there exists a ring homomorphism from $\Pi:\Z[C_\Bbbk]\longto H(\C)[X]$, where $C_\Bbbk$ is a monoid consisting of appropriately defined equivalent classes of standard graded pairs $(R, I)$ over the field $\Bbbk$ and $H(\C)$ denote the ring of holomorphic functions over $\C$. This homomorphism is induced from a map of monoids 
		$\Phi: (C_\Bbbk, \otimes)\longto H(\C)$ which sends a class of a graded pair $(R, I)$ to the Holomorphic Fourier transform of its HK density function $\widehat{f_{R,I}}$ when dimension of $R\geq 2$. Theorem \ref{thtensor} essentially states the `multiplicative' property of this morphism for graded pairs of dimension $\geq 2$. 
		
		In Section 3,  we compute Fourier transformation of  the \mbox{HK} density function $f_{R, I}$ for the  homogeneous coordinate ring $R$ of a projective curve with respect to a homogeneous ideal $I$ of finite colength.  This function was computed for the pair $(R, {\bf m})$ where ${\bf m}$ is the homogeneous maximal ideal of $R$ in \cite[Example 1.3.7]{M}.  
		One can directly check that the Fourier transform is a holomorphic function without using Plancherel theorem. Like the HK density function $f_{R, I}$, the Fourier transform also retains the information about the strong HN-data of the associated syzygy bundle. Using Theorem \ref{thtensor}, we can compute Fourier transform of HK density function for tensor product of projective curves.

		\section{Hilbert Kunz density function on tensor product}
		We recall the definition and few properties of convolution of two functions.
		\begin{defn}\label{dfnconv}
			Let $f:\R\longto \R$ and $g:\R\longto \R$ be two integrable functions, then the convolution of $f$ and $g$ is
			$$f*g:\R\longto \R\ \text{ given by }x\mapsto\int_{-\infty}^{\infty}f(y)g(x-y)dy.$$ 
		\end{defn}
		When $f$ and $g$ are integrable, the above definition always makes sense. Suppose $f, g, h:\R\longto \R$ are integrable functios.
		It is clear that $f*g=g*f$. Using this property and Fubini's theorem one gets $f*(g*h)=(f*g)*h$.

		\begin{lemma}\label{lemtensor}
			Let $R$ and $S$ be standard graded rings over a field $\Bbbk$ of characteristic $p>0$ and let $I\subset R$ and $J\subset S$ be homogeneous ideals of $R$ and $S$, respectively, of finite colength. For an integer $m\geq 0$,
			$$\ell\left(\frac{R\otimes S}{(I\otimes S + R\otimes J)^{[q]}}\right)_{m}=\sum\limits_{i=0}^{m} \ell(R/I^{[q]})_{i} \ell(S/J^{[q]})_{m-i}.
			$$
		\end{lemma}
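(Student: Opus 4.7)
The plan is to reduce the statement to a standard fact about graded tensor products by interchanging Frobenius power with the formation of the extended ideal sum, and then decomposing the graded components of the resulting tensor product.

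First I would verify the identity
$$(I\otimes S + R\otimes J)^{[q]} \;=\; I^{[q]}\otimes S \;+\; R\otimes J^{[q]}.$$
If $I=(f_1,\ldots,f_r)$ and $J=(g_1,\ldots,g_s)$, then $I\otimes S+R\otimes J$ is the ideal of $R\otimes S$ generated by $\{f_i\otimes 1\}\cup\{1\otimes g_j\}$. The Frobenius power is, by definition, generated by the $q$-th powers of any set of generators, so it is generated by $\{f_i^q\otimes 1\}\cup\{1\otimes g_j^q\}$, which is exactly $I^{[q]}\otimes S + R\otimes J^{[q]}$. This is the only point that really uses characteristic $p$ (via the definition of $I^{[q]}$) and it is the principal bookkeeping step.

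Next I would use the natural isomorphism of graded $\Bbbk$-algebras
$$\frac{R\otimes S}{I^{[q]}\otimes S + R\otimes J^{[q]}} \;\cong\; \frac{R}{I^{[q]}}\otimes_{\Bbbk}\frac{S}{J^{[q]}},$$
obtained from the right-exactness of tensor product (or, equivalently, from the universal property of the tensor product of quotient algebras). Here the grading on the right-hand side is the usual one, where the degree-$m$ component of $A\otimes_{\Bbbk}B$ is $\bigoplus_{i+j=m}A_i\otimes_{\Bbbk}B_j$.

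Taking $\Bbbk$-dimensions of the degree-$m$ component then gives
$$\ell\!\left(\frac{R\otimes S}{(I\otimes S+R\otimes J)^{[q]}}\right)_{m} \;=\; \sum_{i+j=m}\dim_{\Bbbk}\!\left(\tfrac{R}{I^{[q]}}\right)_i\cdot\dim_{\Bbbk}\!\left(\tfrac{S}{J^{[q]}}\right)_j,$$
which is exactly the claimed formula, since lengths of homogeneous components over a field coincide with their $\Bbbk$-dimensions. The only potential obstacle is the Frobenius-commutes-with-sum step above; once that is in hand, the remainder is a routine identification of graded pieces of a tensor product.
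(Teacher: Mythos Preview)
Your proof is correct. Both you and the paper begin with the identity $(I\otimes S+R\otimes J)^{[q]}=I^{[q]}\otimes S+R\otimes J^{[q]}$, but after that the arguments diverge. The paper splits into the cases $m<q$ and $m\geq q$; in the second case it computes the length of the degree-$m$ piece of the ideal $I^{[q]}\otimes S+R\otimes J^{[q]}$ by the inclusion--exclusion formula coming from $(I^{[q]}\otimes S)\cap(R\otimes J^{[q]})=I^{[q]}\otimes J^{[q]}$, and then expands and cancels. Your route is shorter and more conceptual: invoking the graded isomorphism $(R\otimes S)/(I^{[q]}\otimes S+R\otimes J^{[q]})\cong (R/I^{[q]})\otimes_\Bbbk(S/J^{[q]})$ directly eliminates both the case distinction and the inclusion--exclusion bookkeeping. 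The paper's argument has the minor advantage of being self-contained at the level of dimension counts, without appealing to the quotient--tensor isomorphism, but your version is the cleaner one.
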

		\begin{proof}Note that $(I\otimes S + R\otimes J)^{[q]}=I^{[q]}\otimes S +R\otimes J^{[q]}$. Hence
			for $0\leq m<q,$ we have
			\begin{eqnarray*}
				\ell\left(\frac{R\otimes S}{(I\otimes S + R\otimes J)^{[q]}}\right)_{m}&=&\ell((R\otimes S)_{m}) = \sum\limits_{i=0}^{m} \ell(R_{i}\otimes S_{m-i}) \\&=& \sum\limits_{i=0}^{m} \ell(R/I^{[q]})_{i}\ell (S/J^{[q]})_{m-i}.
			\end{eqnarray*}

			To prove the result for $m\geq q$, we first note that $$\ell(I^{[q]}\otimes S +R\otimes J^{[q]})_{m}=\ell(I^{[q]}\otimes S)_{m} + \ell(R\otimes J^{[q]})_{m} - \ell(I^{[q])}\otimes J^{[q]})_{m}$$
			
			since $(I^{[q]}\otimes S)\cap(R\otimes J^{[q]}) = I^{[q]}\otimes J^{[q]}$. Now for $m\geq q$, we have
			
			$$\ell\left(\frac{R\otimes S}{(I\otimes S + R\otimes J)^{[q]}}\right)_{m}=\ell\left((R\otimes S)_{m}\right)-\ell(I^{[q]}\otimes S +R\otimes J^{[q]})_{m}$$
			\begin{eqnarray*}
				%&=&\ell\left((R\otimes S)_{m}\right)-\ell((I\otimes S + R\otimes J)^{[q]}_{m})\\
				&=&\ell(R\otimes S)_{m}-\ell(I^{[q]}\otimes S)_{m} - \ell(R\otimes J^{[q]})_{m} + \ell(I^{[q]}\otimes J^{[q]})_{m}\\
				&=&\ell(R\otimes S)_{m}-[\ell(R\otimes S)_{m}-\ell(R/I^{[q]}\otimes S)_{m}]-[\ell(R\otimes S)_{m}-\ell(R\otimes S/J^{[q]})_{m}]\\
				&& +\sum\limits_{i=0}^{m} \left(\ell(R_{i})-\ell(R/I^{[q]})_{i}\right)\times\left(\ell(S_{m-i}) - \ell(S/J^{[q]})_{m-i}\right)\\
				%&=&\ell(R/I^{[q]}\otimes S)_{m}+\ell(R\otimes S/J^{[q]})_{m}-\sum\limits_{i=0}^{m}\ell(R/I^{[q]})_{i}\ell(S_{m-i}) -\sum\limits_{i=0}^{m}\ell(R_{i}) \ell(S/J^{[q]})_{m-i}\\
				%&&+\sum\limits_{i=0}^{m}\ell(R/I^{[q]})_{i}\ell(S/J^{[q]})_{m-i}\\
				&=&\sum\limits_{i=0}^{m}\ell(R/I^{[q]})_{i}\ell(S/J^{[q]})_{m-i}.
			\end{eqnarray*}
			Hence the proof.
		\end{proof}

		\begin{proof}[Proof of Theorem \ref{thtensor}]

			Note that 
			$$(\boxtimes_{i=1}^{n-1}I(i))\otimes R(n)+(\otimes_{i=1}^{n-1}R(i))\otimes I(n)=\boxtimes_{i=1}^n I(i).$$ Hence by induction and because of the associativity property of the convolution, it is enough to
			prove the result for $n=2$. 
			
			Let $R$ and $S$ be standard graded rings over a field $\Bbbk$ of characteristic $p>0$ of dimension $d$ and $d'$ respectively ($d, d'\geq 2$), and let $I\subset R$ and $J\subset S$ be homogeneous ideals of $R$ and $S$, respectively, of finite colength.
			By Lemma \ref{lemtensor},
			\begin{eqnarray}
				\label{eqone}
				f_{n}(R\otimes S, I\boxtimes J)(x)=\frac{1}{q^{d+d'-1}}\sum\limits_{i=0}^{m} \ell(R/I^{[q]})_{i}\ell (S/J^{[q]})_{m-i}.
			\end{eqnarray} 
			For each $n\in\N$, consider the function $\phi_n:[0,\infty)\longto[0, \infty)$ given by 
			$$x\mapsto\int_0^{x+\frac{1}{q}} f_n(R, I)(y)f_n(S, J)(x+\frac{1}{q}-y)dy.$$
			
			We claim the limit  $\phi(x)=\lim\limits_{n\rightarrow\infty}\phi_{n}(x)$ exists and
			 that $\phi(x)=(f_{R, I}*f_{S, J}) (x)$ for all $x\in [0,\infty)$.
			We have 
			\begin{eqnarray*}
				\phi(x)&=&\lim\limits_{n\rightarrow\infty}\int_0^{x+\frac{1}{q}} f_n(R, I)(y)f_n(S, J)(x+\frac{1}{q}-y)dy\\
				&=&\lim\limits_{n\rightarrow\infty}\int_0^{x} f_n(R, I)(y)f_n(S, J)(x+\frac{1}{q}-y)dy \\&&+ \lim\limits_{n\rightarrow\infty}\int_x^{x+\frac{1}{q}} f_n(R, I)(y)f_n(S, J)(x+\frac{1}{q}-y)dy.
			\end{eqnarray*}
			
			\noindent Since $\{f_n(R, I)\}_n\ \textrm{and}\ \{f_n(S, J)\}_n$ converge uniformly to $f_{R, I}$ and $f_{S, J}$ respectively, we have
			\begin{eqnarray*} 
				&&\lim\limits_{n\rightarrow\infty}\int_0^{x} f_n(R, I)(y)f_n(S, J)(x+\frac{1}{q}-y)dy\\&=&\int_0^{x} \lim\limits_{n\rightarrow\infty} f_n(R, I)(y)f_n(S, J)(x+\frac{1}{q}-y)dy=\int_0^{x} f(y)g(x-y)dy. 
			\end{eqnarray*}
			Since each $f_n(R, I)$ is compactly supported continuous function and $\{f_n(R, I)\}$ converges to $f_{R, I}$ uniformly, we have $\{f_n(R, I)\}$ is uniformly bounded. Similarly $\{f_n(S, J)\}$ is uniformly bounded. Hence we can choose a positive constant $C$ such that $\arrowvert f_n(R, I)(x)\arrowvert\leq C$ and $\arrowvert f_n(S, J)(x)\arrowvert\leq C$ for all $n\in\N$ and for all $x\in [0,\infty)$. Hence
			
			\begin{eqnarray*}&&\big\arrowvert\lim\limits_{n\rightarrow\infty}\int_x^{x+\frac{1}{q}} f_n(R, I)(y)f_n(S, J)(x+\frac{1}{q}-y)dy\big\arrowvert
				\\&=&\lim\limits_{n\rightarrow\infty}\int_0^{\frac{1}{q}} f_n(R, I)(x+y)f_n(S, J)(-y+\frac{1}{q})dy
				~\leq~\lim\limits_{n\rightarrow\infty}\int_0^{\frac{1}{q}} C dy = 0.
			\end{eqnarray*}
			Hence,
			$$\phi(x)=\int_0^{x} f_{R, I}(y)f_{S, J}(x-y)dy =(f_{R, I}*f_{S, J})(x). $$
			Now take $x_0=\frac{m_0}{q_0}$, with $m\in \N$, $q_0=p^{n_0}$ and $n_0\in\N$. Let $q\geq q_0$. Write $m=\lfloor qx_0\rfloor=m_0q/q_0$.
			We have
			\begin{eqnarray*}\phi_{n}(x_0)&=&\sum\limits_{i=0}^{m} \int_{\frac{i}{q}}^{\frac{i+1}{q}} f_n(R, I)(y)f_n(S, J)(x+\frac{1}{q}-y)dy\\
				&=&\sum\limits_{i=0}^{m} \int_{\frac{i}{q}}^{\frac{i+1}{q}} \frac{1}{q^{d+d'-2}}\ell(R/I^{[q]})_{i}\ell(S/J^{[q]})_{m-i}dy\\
		&=&\sum\limits_{i=0}^{m} \frac{1}{q^{d+d'-1}}\ell(R/I^{[q]})_{i}\ell(S/J^{[q]})_{m-i}=f_n(R\otimes S, I*J)(x_0).
			\end{eqnarray*}

			Hence for all $x\in \Lambda=\{m'/q'\in \R\mid q=p^{n'}, m', n'\in \N\}$, we have
			$$f_{R\otimes S, I\boxtimes J}(x)=\lim_n f_n(R\otimes S, I\boxtimes J)(x)=\lim_n\phi_n(x)=\phi(x). $$
			Since the continuous functions $f_{R\otimes S, I\boxtimes J}$ and $\phi$ agree on the dense set $\Lambda\subset [0,\infty)$, we have $f_{R\otimes S, I\boxtimes J}=\phi=f_{R, I}*f_{S, J}$.
		\end{proof}
		%%%%%%%%%
		%%%%%%%%%
		%%%%%%
		\begin{defn}
			Let $f :\R\longto \R$ is an integrable function. Then the Holomorphic Fourier transform of $f$ is defined as $\hat{f}:\C\longto \C$ given by
			$$\hat{f}(\xi)=\int_{-\infty}^{\infty}f(x)\mathrm{e}^{- \mathrm{i} x\xi}\mathrm{d}x, \text{ for }\xi\in\C.$$ 
		\end{defn}
		\begin{rmk}If $f:\R\longto \R$ is a compactly supported function with $f\in \sL^2(-A, A)$ for some constant $A>0$, then by Plancherel theorem, $\hat{f}$ is a holomorphic function and $|\hat{f}(\xi)|\leq Ce^{A|\xi|}$ for all $\xi\in \C$ where 
			$C=\int_{-A}^Af(x)dx$. Conversely, if $F:\C\longto\C$ is a holomorphic function such that $|F(\xi)|\leq Ce^{A|\xi|}$ for some constants $A,C>0$, then there exists a function $f\in \sL^2(-A, A)$ such that $F=\hat{f}$ \cite[Theorem 19.3]{Rudin}. Note that if $f, g:\R\longto\R$ continuous function such that $F=\hat{f}=\hat{g}$, then $f=g$. Thus the HK density function can be recovered from its Holomorphic Fourier transformation.
		\end{rmk}
		
		%%%%%%
	
		%%%%%%%
		\begin{corollary}Let $R(i)$,  $I(i)$  for $i=1,\ldots, n$ be as in Theorem \ref{thtensor}. Then
			$$e_{HK}(\otimes_i R(i), \boxtimes_i I(i))=\prod_ie_{HK}(R(i),  I(i)). $$
		\end{corollary}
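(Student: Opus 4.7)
The plan is to deduce the corollary directly from Theorem \ref{thtensor} by integrating both sides. Recall Trivedi's defining relation $e_{HK}(R, I) = \int_0^\infty f_{R, I}(x)\, dx$; since every HK density function is extended by $0$ on $(-\infty, 0]$ and is compactly supported continuous (in particular in $\sL^1(\R)$), this coincides with $\int_{\R} f_{R, I}(x)\, dx$.

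The cleanest route uses the Holomorphic Fourier transform just introduced. Evaluating at $\xi = 0$ in the definition gives $\hat{f}(0) = \int_{\R} f(x)\, dx$, so in particular
$$\widehat{f_{R, I}}(0) = e_{HK}(R, I).$$
By the standard convolution identity
$$\widehat{f * g}(\xi) = \hat{f}(\xi)\,\hat{g}(\xi),$$
which follows from Fubini's theorem (the compactness of supports legitimates interchanging integrals), together with a straightforward induction on $n$, Theorem \ref{thtensor} implies
$$\widehat{f_{\otimes_i R(i), \boxtimes_i I(i)}}(\xi) = \prod_{i=1}^n \widehat{f_{R(i), I(i)}}(\xi)$$
for every $\xi \in \C$. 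Setting $\xi = 0$ yields the desired identity
$$e_{HK}(\otimes_i R(i), \boxtimes_i I(i)) = \prod_{i=1}^n e_{HK}(R(i), I(i)).$$

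Alternatively, one can bypass the Fourier transform entirely and apply Fubini directly to $f*g$: for any two integrable functions supported in $[0, \infty)$ one has $\int_{\R}(f*g)(x)\, dx = \bigl(\int_{\R} f\bigr)\bigl(\int_{\R} g\bigr)$, and an induction on $n$ finishes the argument. There is no substantive obstacle here; the only point to verify is that the compact support and continuity of each $f_{R(i), I(i)}$ justify every application of Fubini and guarantee that the convolutions computed in Theorem \ref{thtensor} are themselves integrable, which is automatic since a finite convolution of compactly supported continuous functions is again compactly supported and continuous.
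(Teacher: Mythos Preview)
Your proof is correct and follows essentially the same approach as the paper: extend each $f_{R(i),I(i)}$ by $0$ to $\R$, observe $e_{HK}=\widehat{f_{R,I}}(0)$, use the convolution identity $\widehat{f*g}=\hat f\,\hat g$ together with Theorem~\ref{thtensor}, and conclude by induction. Your added remarks on Fubini and compact supports, and the alternative direct-integration argument, are welcome elaborations but do not change the route.
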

		\begin{proof}
			Note that, the \mbox{HK} density function $f_{R, I}$ for a standard graded ring $R$ of dimension $\geq 2$ and a homogeneous ideal $I$ of finite colength can be considered as a function on the real line $\R$, extending by $0$ on $(-\infty, 0].$  
			Hence we have 
			$$e_{HK}(R, I)=\int_{-\infty}^{\infty}f_{R, I}(x)\mathrm{d}x=\widehat{{f}_{R, I}}(0).$$
			By induction on $n$, it is enough to prove for $n=2$. Let $R$ and $S$ be standard graded rings of dimension $\geq 2$ and let $I\subset R$ and $J\subset S$ be homogeneous ideals of $R$ and $S$, respectively, of finite colength. Then
			$$e_{HK}(R\otimes S, I\boxtimes J)=\widehat{f_{R,I}*f_{S,J}}(0)=\widehat{f_{R, I}}(0)\widehat{f_{S,J}}(0)=e_{HK}(R,I)e_{HK}(S,J).$$
			Hence the assertion is proved by induction on $n$.
		\end{proof}

		\section{Fourier transform of HK density function for projective curves}

		Let $X$ be a nonsingular projective curve over an algebraically closed field $K$ of characteristic $p>0$ and let $V$ be a vector bundle of rank $r$ on $X$. We recall that $\text{deg}(V)=\text{deg}(\wedge^rV)$ where $\wedge^rV$ denotes the determinant line bundle of the bundle $V$, and slope of $V$ is defined as, $\mu(V)=\text{deg}(V)/\text{rank}(V).$
		The vector bundle $V$ is called \textit{semistable} if,  for all subbundles $V'\hookrightarrow V$, we have $\mu(V')\leq \mu(V).$
		A vector bundle $V$ on $X$ is called \textit{strongly semistable} if $F^{n*}V$ is semistable, where $F^n: X\longto X$ is the $n$-th iterated absolute Frobenius morphism, for all $n\geq 0$.
		
		\begin{thm}[Lemma 1.3.7, \cite{HN1975}]\label{thhnf}
			Let $X$ be a smooth projective curve and let $V$ be a vector bundle on $X$. Then there exists a unique filtration $($called Harder-Narasimhan filtration$)$, by subbundles of $V,$
			\begin{eqnarray}\label{hnf}
				0=V_0\subset V_1\subset\cdots\subset V_l=V
			\end{eqnarray}
			such that
			\begin{enumerate}
				\item $V_1, V_2/V_1\ldots, V/V_{l-1}$ are semistable vector bundles and
				\item $\mu(V_1)>\mu(V_2/V_1)>\cdots>\mu(V/V_{l-1}).$
			\end{enumerate}
		\end{thm}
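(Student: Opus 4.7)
The plan is to construct the filtration recursively by extracting a maximal destabilizing subbundle at each step, and then to establish uniqueness by a direct comparison argument.

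The essential finiteness input I would establish first is that the set of slopes $\{\mu(W) \mid 0\neq W\subset V \text{ a subbundle}\}$ is bounded above. On a smooth projective curve this follows from a Riemann-Roch estimate: twisting by a sufficiently negative power of an ample line bundle $L$, a subbundle of rank $r'$ and very large slope would be forced to have many sections inside $V\otimes L^{-N}$, whose $h^0$ is controlled by $r$, $\deg(V)$ and $N$. Since every coherent subsheaf has a saturation that is a subbundle of the same rank and slope at least as large, this simultaneously bounds slopes of arbitrary coherent subsheaves.

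Given the bound, I would define $V_1\subseteq V$ as a subbundle that first maximizes $\mu(W)$ and then maximizes $\rank(W)$. Existence and uniqueness of such $V_1$ go together: if $W,W'$ both realize the maximum, then the short exact sequence
$$0\longto W\cap W'\longto W\oplus W'\longto W+W'\longto 0$$
combined with the bound $\mu(W\cap W')\leq \mu(V_1)$ gives $\mu(W+W')\geq \mu(V_1)$, forcing $W=W'$ by the maximal-rank property. Semistability of $V_1$ is immediate, since any subsheaf of strictly larger slope would contradict maximality of $\mu(V_1)$. I would then produce the remaining steps of the filtration by induction on rank applied to $V/V_1$ and pulling back to $V$; the inequality $\mu(V_1)>\mu(V_2/V_1)$ must hold, for otherwise $V_2$ would be a subbundle with $\mu(V_2)\geq \mu(V_1)$ and strictly larger rank than $V_1$.

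For uniqueness of the filtration, suppose $\{V_i\}$ and $\{W_j\}$ are two such filtrations. The composite $V_1\hookrightarrow V\longto V/W_1$ must vanish, because otherwise its image would be a nonzero subsheaf of $V/W_1$ of slope $\geq \mu(V_1)\geq \mu(W_1)$, contradicting the fact that every HN-slope of $V/W_1$ is strictly less than $\mu(W_1)$. Hence $V_1\subseteq W_1$, and by symmetry $V_1=W_1$; induction on $\rank(V)$ applied to $V/V_1$ finishes the argument. The main obstacle I expect is the boundedness of slopes of subsheaves, which is the only substantive analytic input — without it the maximizing procedure is ill-defined; everything else is bookkeeping that exploits the maximality property built into $V_1$.
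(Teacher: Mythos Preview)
Your argument is the standard proof of the Harder--Narasimhan filtration and is essentially correct: bound the slopes of subsheaves, extract the maximal destabilizing subbundle, recurse, and prove uniqueness by showing the first steps of any two HN filtrations coincide. One small point to tighten: in the uniqueness paragraph you invoke $\mu(V_1)\geq\mu(W_1)$ without justification. This is true, but it requires the observation that for \emph{any} filtration satisfying (1) and (2) the first slope equals $\mu_{\max}(V)$; you can see this by filtering an arbitrary subsheaf $W$ by $W\cap V_i$ and noting each successive quotient embeds in the semistable $V_i/V_{i-1}$, so $\mu(W)$ is a weighted average of numbers $\leq\mu(V_1)$. Once that is said, the symmetry step is clean. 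Also, in the maximal destabilizing argument you should note that $W+W'$ need not be saturated, so one passes to its saturation (which only raises the slope) before invoking maximal rank.

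As for comparison with the paper: there is nothing to compare. The paper does not prove this theorem at all; it is quoted verbatim from Harder--Narasimhan \cite{HN1975} as background for the definition of strong HN data and is used only as an input to the computation in Section~3. So your proposal is not an alternative to the paper's proof but rather a self-contained substitute for the external citation.
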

		The filtration in (\ref{hnf}) is called \textit{strong Harder-Narasimhan filtration} if, in addition, all the quotients $V_i/V_{i-1}$ are strongly semistable vector bundles. It is known that there exists $n_0>0$ such that $F^{n*}V$ has a strong Harder-Narasimhan filtration, for all $n\geq n_0$ \cite[Theorem 2.7]{Lan2004}. 
		%%%%%%%%%%%%%
		\begin{defn}\label{hnfdefn}
			With notations as in Theorem \ref{thhnf}, the slopes $\mu_i(V):=\mu(V_i/V_{i-1}),$ $i=1,\ldots, l$ are called the \textit{Harder-Narasimhan slopes} of $V$. 
	
			Let $n\in\N$ be such that $F^{n*}V$ has strong Harder-Narasimhan filtration
			$$0=E_0\subset E_1\subset\cdots\subset E_l=F^{n*}V.$$
			Define 
			$a_i(V):=\frac{\mu(E_i/E_{i-1})}{p^n}=\frac{\mu_i(F^{n*}V)}{p^n}$ for $i=1,\ldots, l$
			to be the \textit{normalised Harder-Narasimhan slopes} of $V$. Define $r_i(V)=\text{rank} (E_i/E_{i-1}),\ \text{ for }\ i=1,\ldots, l.$
			We call the set
			$(\{a_1,\ldots, a_l\}, \{r_1,\ldots, r_l\})$
			the \textit{strong $HN$ data} for $V$. It is easy to see that $a_i(V)$ and $r_i(V)$ are well defined.
		\end{defn}
		%%%%%%%%

		\begin{defn}For a vector bundle $V$ on $X$ and an ample line bundle $\sO_X(1)$ on $X$ we define the \mbox{HK} density function of $V$ with respect to $\sO_X(1)$, 
			$$f_{V, \sO_X(1)}:[0,\infty)\longto[0,\infty)\text{ given by }x\mapsto\lim_{n\to\infty} \frac{1}{p^n}h^1(X, F^{n^*}(\lfloor(x-1)p^n\rfloor)).$$
			It is known that $f_{V, \sO_X(1)}$ is completely determined by the strong HN data of $V$ and $d=$ degree of the line bundle $\sO_X(1)$\cite[Section 6]{TW}.
		\end{defn}
		\begin{propose}\label{curveHKdFT}
			Let $R$ be a standard graded domain over a perfect field $\Bbbk$ of characteristic $p>0$ and let $I$ be a homogeneous ideal of $R$ of finite colength. Let ${h_1,\ldots,h_m}$ be a homogeneous set of generators of $I$ with $\deg h_i=e_i$ and $e_1=\cdots=e_{s_1}<e_{s_1+1}=\cdots=e_{s_1+s_2}<\cdots<e_{s_1+\dots+s_{l'-1}+1}=\cdots=e_{s_1+\cdots+s_{l'}}=e_m$ for some $s_1,\ldots, s_{l'}\geq 1$. Let $X$ be the normalization of the projective curve $\textnormal{\mbox{Proj}}~R$, $d=\deg(X)$ and let $\{(a_1,\ldots,a_{l}), (r_1,\ldots,r_l)\}$ be the strong \mbox{HN} data of the syzygy bundle $\sV$ given by the exact sequence
			$$0\longto \sV\longto \oplus_{i=1}^m\sO_X(1-e_i)\stackrel{(h_1, \ldots, h_m)}{\longto} \sO_X(1)\longto 0.$$
			Then 	\begin{eqnarray*}
				\widehat{{f}_{R, I}}(\xi)
				&=&-\frac{d}{\xi^2} \left[\sum_{j=1}^{l}r_j\mathrm{e}^{- \mathrm{i}(1-\frac{a_j}{d})\xi}-\sum_{k=1}^{l'}s_k\mathrm{e}^{-\mathrm{i}(1-\frac{1-e(k)}{d})\xi}+\textnormal{rank}(\sO_X(1))\mathrm{e}^{- \mathrm{i} (1-\frac{\deg \sO_X(1)}{d})\xi}\right],
			\end{eqnarray*}
			where $e(k)=e_{s_1+\cdots+s_k}$ for $k=1,\ldots,l'$.
		\end{propose}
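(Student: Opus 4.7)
The plan is to show that $f_{R,I}$ is a compactly supported continuous piecewise linear function whose breakpoints and slope jumps are exactly encoded by the three sums appearing in the claim, and then to compute its Fourier transform by the elementary formula for piecewise linear functions.

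\textbf{Step 1 (cohomological formula for the length).} Apply $F^{n*}$ to the defining syzygy sequence of $\sV$ and twist by $\sO_X(m-q)$, where $q=p^n$ and $m=\lfloor qx\rfloor$. This yields on $X$
$$0 \longto F^{n*}\sV\otimes\sO_X(m-q) \longto \bigoplus_{i=1}^{m}\sO_X(m-qe_i) \longto \sO_X(m) \longto 0.$$
The long exact sequence in cohomology, combined with $\dim R_k = h^0(X,\sO_X(k))$ up to an error bounded independently of $q$ (coming from the discrepancy between $R$ and $\bigoplus_k H^0(X,\sO_X(k))$), extracts
$$\ell(R/I^{[q]})_m \;=\; h^1\bigl(F^{n*}\sV(m-q)\bigr) \;-\; \sum_{i=1}^{m} h^1\bigl(\sO_X(m-qe_i)\bigr) \;+\; h^1\bigl(\sO_X(m)\bigr) \;+\; O(1).$$

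\textbf{Step 2 (piecewise linear form).} Choose $n_0$ so that $F^{n_0*}\sV$ has a strong Harder-Narasimhan filtration; then for $n\geq n_0$ the graded pieces of $F^{n*}\sV$ have rank $r_j$ and slope $p^n a_j$. For a strongly semistable graded piece of slope $p^n a_j$, Serre duality together with the vanishing of $H^0$ for negative-slope semistable bundles gives, after the twist by $\sO_X(m-q)$, the asymptotic $\tfrac{1}{q}h^1(\cdot)\to d\,r_j(1-a_j/d-x)^{+}$ as $q\to\infty$. An analogous elementary Riemann-Roch computation handles each line bundle in the alternating sum, contributing ramps whose breakpoints are located at the points dictated by the degrees $e_i$ and by $x=0$. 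Assembling these with the signs from Step 1 and invoking Trivedi's uniform convergence $f_n(R,I)\to f_{R,I}$ to legitimise term-by-term passage to the limit, one realises $f_{R,I}$ as a continuous, compactly supported, piecewise linear function on $\R$ whose slope jumps are $d\,r_j$, $-d\,s_k$ and $d$ at precisely the breakpoints appearing in the three exponentials of the claim.

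\textbf{Step 3 (Fourier transform).} For any continuous compactly supported piecewise linear $f$ on $\R$ with slope jumps $\Delta_k$ at vertices $c_k$, the distributional second derivative is $f''=\sum_k \Delta_k\delta_{c_k}$, whence $-\xi^2\widehat f(\xi) = \widehat{f''}(\xi) = \sum_k \Delta_k\, e^{-ic_k\xi}$ and hence $\widehat f(\xi) = -\xi^{-2}\sum_k \Delta_k\, e^{-ic_k\xi}$. Substituting the breakpoints and slope jumps determined in Step 2 gives the stated Fourier transform; the overall factor $d$ is pulled out, the three groups of exponentials match the three sums in the claim, and the signs are inherited from the long exact sequence.

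The main obstacle will be the bookkeeping in Step 2: tracking the factor $d=\deg\sO_X(1)$ through the Riemann-Roch asymptotics so that the breakpoints and slope jumps come out as asserted, and verifying that the $O(1)$ error in Step 1 together with lower-order Riemann-Roch corrections does not contribute in the limit $q\to\infty$. The uniform convergence of $\{f_n(R,I)\}$ proved by Trivedi is exactly what makes the termwise passage legitimate.
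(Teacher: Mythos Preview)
Your approach is correct and reaches the same destination as the paper, but via a somewhat different packaging that is worth noting.

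The paper does not re-derive the piecewise linear form of $f_{R,I}$; it simply cites \cite[Section~6.1 and Theorem~6.3]{TW} for the explicit description $f_{R,I}(x)=dx$ on $[0,1]$ and $f_{R,I}=f_{\sV,\sO_X(1)}-f_{M,\sO_X(1)}$ on $[1,\infty)$, together with the explicit piecewise linear formulas for $f_{\sV}$ and $f_M$. It then computes each of the three integrals $\int_0^1$, $\int_1^\infty f_{\sV}$, $\int_1^\infty f_M$ by hand (two integrations by parts), producing extra boundary terms at $x=1$ of the shape $\tfrac{d}{\xi^2}(\sum_j r_j)e^{-i\xi}$ and $\tfrac{i}{\xi}(\sum_j a_jr_j)e^{-i\xi}$, and finally kills these using the rank and degree identities $\sum_k s_k=\sum_j r_j+1$ and $\sum_j a_jr_j-\sum_k(1-e(k))s_k=d$ coming from the syzygy exact sequence.

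Your Steps~1--2 instead re-derive the \cite{TW} input from scratch, which is more self-contained but not needed once that reference is available. Your Step~3, computing $\widehat f$ via $f''=\sum_k\Delta_k\delta_{c_k}$, is a cleaner repackaging of the paper's double integration by parts: because you work directly with the globally continuous, compactly supported $f_{R,I}$, the boundary terms at $x=1$ never appear and the rank/degree identities are absorbed automatically into the continuity of $f_{R,I}$ at $x=0$ and $x=1$. So the two arguments are equivalent in content; yours hides the exact-sequence identities inside the distributional formalism, while the paper makes them explicit. The bookkeeping caveat you flag is real: you must check carefully that the breakpoints you obtain in Step~2 are literally the numbers $1-\tfrac{a_j}{d}$, $1-\tfrac{1-e(k)}{d}$, and $0=1-\tfrac{\deg\sO_X(1)}{d}$ appearing in the exponents, which amounts to tracking the normalization of the strong HN slopes used in \cite{TW}.
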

		\begin{proof}
			
			Set $a_0=0$. Using the description of  
			$f_{\sV, \sO_X(1)}(x)$ from \cite[Section 6.1]{TW}, we have
			\begin{eqnarray}\label{eq1}
				\nonumber &&\int_1^{\infty}f_{\sV, \sO_X(1)}(x)\mathrm{e}^{-\mathrm{i}x\xi} \mathrm{d}x\\ 
				\nonumber &\quad & =\sum_{j=0}^{l}\int_{1-\frac{a_j}{d}}^{1-\frac{a_{j+1}}{d}}\sum_{k=j+1}^{l}-(a_kr_k+r_kd(x-1))\mathrm{e}^{-\mathrm{i}x\xi}\mathrm{d}x\\
				\nonumber & \quad &=\sum_{j=1}^{l}\int_{1}^{1-\frac{a_{j}}{d}}-(a_jr_j+r_jd(x-1))\mathrm{e}^{-\mathrm{i}x\xi}\mathrm{d}x\\
				\nonumber &\quad &=\sum_{j=1}^{l}(r_jd-a_jr_j)\int_{1}^{1-\frac{a_j}{d}}\mathrm{e}^{-\mathrm{i} x\xi} \mathrm{d}x-r_jd\int_{1}^{1-\frac{a_j}{d}}x\mathrm{e}^{-\mathrm{i} x\xi}\mathrm{d}x\\
				&\quad &=-\frac{d}{\xi^2} \sum_{j=1}^{l}r_j\mathrm{e}^{- \mathrm{i}(1-\frac{a_j}{d})\xi}+\frac{d}{\xi^2}(\sum_{j=1}^{l}r_j)\mathrm{e}^{-\mathrm{i}\xi}+\frac{\mathrm{i}}{\xi}(\sum_{j=1}^{l}a_jr_j)\mathrm{e}^{-\mathrm{i}\xi}.
			\end{eqnarray}
			
			Write $M:=\oplus_{i=1}^m\sO_X(1-e_i)$. Let us denote $e(k):=e_{s_0+\cdots+s_{k-1}+1}=\cdots=e_{s_0+\cdots+s_k}$ for $k=1,\ldots,l'$ with the convention $s_0=0$. Then the strong HN data for $M$ is $(\{1-e(1),\ldots,1-e(l')\},\{s_1,\ldots,s_{l'}\})$. 	Similarly as the vector bundle $\sV$, for the vector bundle $M$ we get 
			\begin{eqnarray}\label{eq2}
				\nonumber \int_1^{\infty}f_{M, \sO_X(1)}(x)\mathrm{e}^{-\mathrm{i}x\xi} \mathrm{d}x &=&-\frac{d}{\xi^2} \sum_{k=1}^{l'}s_k\mathrm{e}^{- \mathrm{i}(1-\frac{1-e(k)}{d})\xi}\\ 
				&\quad &+\frac{d}{\xi^2}(\sum_{k=1}^{l'}s_k)\mathrm{e}^{- \mathrm{i}\xi}+\frac{\mathrm{i}}{\xi}\big(\sum_{k=1}^{l'}[1-e(k)]s_k\big)\mathrm{e}^{-\mathrm{i}\xi}.
			\end{eqnarray}
			
			Also we have 
			\begin{eqnarray}\label{eq3}
				&&\int_0^{1}dx\mathrm{e}^{-\mathrm{i} x\xi} \mathrm{d}x =\quad =-\frac{d}{\xi^2}+\frac{d}{\xi^2}\mathrm{e}^{- \mathrm{i}\xi}+\frac{id}{\xi}\mathrm{e}^{-\mathrm{i}\xi}.
			\end{eqnarray}
			
			Since $f_{R, I}(x)=f_{\sV, \sO_X(1)}(x)-f_{M, \sO_X(1)}(x)$ for $x\in [0, \infty)$ \cite[Theorem 6.3]{TW}, we have
			\begin{eqnarray*}
				\widehat{{f}_{R, I}}(\xi)&=&\int_0^{\infty}f_{R, I}(x)\mathrm{e}^{-\mathrm{ i} x\xi}\mathrm{d}x
				\\&=&\int_0^{1}dx\mathrm{e}^{- \mathrm{i} x\xi} \mathrm{d}x+\int_{1}^{\infty}f_{\sV, \sO_X(1)}(x)\mathrm{e}^{-\mathrm{i}x\xi} \mathrm{d}x-\int_1^{\infty}f_{M, \sO_X(1)}(x)\mathrm{e}^{-\mathrm{i}x\xi} \mathrm{d}x.
			\end{eqnarray*}
			
			Since $\sum_j a_jr_j-\sum_k [1-e(k)]s_k=d$ and $\sum_k s_k=\sum_j r_j +1$, using (\ref{eq1}), (\ref{eq2}) and (\ref{eq3}) we get
			
			\begin{eqnarray*}
				\widehat{{f}_{R, I}}(\xi)
				&=&-\frac{d}{\xi^2} \left[\sum_{j=1}^{l}r_j\mathrm{e}^{-\mathrm{ i}(1-\frac{a_j}{d})\xi}-\sum_{k=1}^{l'}s_k\mathrm{e}^{-\mathrm{ i}(1-\frac{1-e(k)}{d})\xi}+1\right]\\
				&=&-\frac{d}{\xi^2} \left[\sum_{j=1}^{l}r_j\mathrm{e}^{-\mathrm{ i}(1-\frac{a_j}{d})\xi}-\sum_{k=1}^{l'}s_k\mathrm{e}^{- \mathrm{i}(1-\frac{1-e(k)}{d})\xi}+\text{rank}(\sO_X(1))\mathrm{e}^{-\mathrm{ i} (1-\frac{\deg \sO_X(1)}{d})\xi}\right].
			\end{eqnarray*}
		\end{proof}
		
		\begin{corollary}
			Adopt the notations of Proposition \ref{curveHKdFT}. Then $\widehat{f_{R, I}}$ is a holomprphic function.
		\end{corollary}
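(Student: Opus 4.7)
The plan is to read off holomorphy directly from the explicit formula for $\widehat{f_{R, I}}(\xi)$ in Proposition \ref{curveHKdFT}, in keeping with the direct approach advertised in the introduction. All of the exponentials $\mathrm{e}^{-\mathrm{i}(1-a_j/d)\xi}$, $\mathrm{e}^{-\mathrm{i}(1-(1-e(k))/d)\xi}$ and $\mathrm{e}^{-\mathrm{i}(1-\deg\sO_X(1)/d)\xi}$ are entire in $\xi$, so the only possible singularity of
$$\widehat{f_{R, I}}(\xi) \;=\; -\frac{d}{\xi^2}\,B(\xi),$$
where $B(\xi)$ denotes the bracket appearing in the formula of Proposition \ref{curveHKdFT}, lies at $\xi = 0$. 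It therefore suffices to prove that $B$ has a zero of order at least $2$ at the origin; a removable-singularity argument then makes $\widehat{f_{R, I}}$ entire on $\C$.

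I would carry this out by Taylor expanding $B(\xi) = B(0) + B'(0)\,\xi + O(\xi^2)$ and checking that the two lowest coefficients vanish. For the constant term, $B(0) = \sum_j r_j - \sum_k s_k + \textnormal{rank}(\sO_X(1))$, and the defining exact sequence
$$0 \longto \sV \longto \oplus_{i=1}^m \sO_X(1-e_i) \longto \sO_X(1) \longto 0$$
gives $\textnormal{rank}(\sV) = m - 1$; combined with $\sum_j r_j = \textnormal{rank}(\sV)$, $\sum_k s_k = m$ and $\textnormal{rank}(\sO_X(1)) = 1$ this yields $B(0) = 0$.

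For the linear coefficient, $B'(0)$ is, up to an overall $-\mathrm{i}$, a linear combination of $\sum_j r_j a_j$, $\sum_k s_k(1-e(k))$ and $\deg\sO_X(1)$, in which the three quantities enter, after the appropriate rescaling by $d$, as $\deg\sV$, $\deg(\oplus_i \sO_X(1-e_i))$ and $\deg\sO_X(1)$ respectively. Additivity of degree in the exact sequence above then forces the combination to vanish, so $B'(0) = 0$.

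The main obstacle is bookkeeping: one must correctly line up the normalisation of the strong HN slopes $a_j = \mu(E_j/E_{j-1})/p^n$ with $\deg\sV$, and track the factor of $d$ arising from the scaling $1 - a_j/d$ in the exponents (these are precisely the two identities written down as $\sum_j a_jr_j-\sum_k[1-e(k)]s_k=d$ and $\sum_k s_k=\sum_j r_j+1$ in the proof of Proposition \ref{curveHKdFT}). Once they are in hand, $B(\xi) = O(\xi^2)$ and the removable-singularity argument completes the proof. An alternative, shorter route would be to invoke Paley--Wiener via the Remark preceding the corollary, since $f_{R, I}$ is compactly supported and continuous, hence lies in $\sL^2(-A,A)$ for any $A$ larger than the diameter of its support; but the direct verification above is more in the spirit of Section~3.
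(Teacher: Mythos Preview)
Your proposal is correct and follows essentially the same route as the paper: both arguments Taylor-expand the bracket $B(\xi)$ about $\xi=0$, use the rank identity $\sum_k s_k=\sum_j r_j+1$ to kill the constant term and the degree identity $\sum_j a_jr_j-\sum_k[1-e(k)]s_k=d$ to kill the linear term, and then conclude that the apparent singularity at $\xi=0$ is removable. Your framing of these two identities as rank- and degree-additivity in the defining exact sequence for $\sV$ is a nice conceptual gloss, and your remark about the Paley--Wiener alternative matches the paper's own comment that it is deliberately bypassing Plancherel.
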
		
		\begin{proof} Here we show that $\widehat{f_{R, I}}$ is a holomprphic function without using Plancherel theorem. To see this write the power series expansion of each term in $\widehat{f_{R, I}}$:
			\begin{eqnarray*}
				\widehat{{f}_{R, I}}(\xi)
				&=&-\frac{d}{\xi^2} \left[\sum_{j=1}^{l}r_j\mathrm{e}^{-\mathrm{ i}(1-\frac{a_j}{d})\xi}-\sum_{k=1}^{l'}s_k\mathrm{e}^{-\mathrm{ i}(1-\frac{1-e(k)}{d})\xi}+1\right]\\
				&=&-\frac{d}{\xi^2} \left[\sum_{j=1}^{l}r_j-\mathrm{i}\sum_{j=1}^{l}r_j(1-\frac{a_j}{d})\xi-\sum_{k=1}^{l'}s_k +\mathrm{i}\sum_{k=1}^{l'}s_k(1-\frac{1-e(k)}{d})\xi+1+h(\xi)\right]
			\end{eqnarray*}
			where $h:\C\longto \C$ is a holomorphic function with power series expansion involving terms $\xi^k$ for $k\geq 2$. Since
			$\sum_jr_j+1=\sum_k s_k$, we have
			\begin{eqnarray*}
				\widehat{{f}_{R, I}}(\xi)
				=\frac{\mathrm{i}d}{\xi}\left[-\sum_{j=1}^{l}r_j(1-\frac{a_j}{d}) +\sum_{k=1}^{l'}s_k(1-\frac{1-e(k)}{d})\right]-\frac{d}{\xi^2}h(\xi).
			\end{eqnarray*}
			Now since $\sum_j a_jr_j-\sum_k [1-e(k)]s_k=d$ and $\sum_k s_k=\sum_j r_j +1$, we have 
			$\widehat{{f}_{R, I}}(\xi)
			=-\frac{d}{\xi^2}h(\xi)$, which is a holomorphic function.
			
		\end{proof}
		
		\vspace{.2cm}
				
		\noindent{\bf Acknowledgement}\\
		
		The author thanks Prof.  V. Trivedi for posing the question about \mbox{HK} density function of tensor product of rings and also for suggesting possible application of Fourier transform to the theory of \mbox{HK} density function. The author is supported by DST grant DST/INSPIRE/04/2021/001549.

		\medskip
		
		\textsc{Department of Mathematics, Indian Institute of Technology Jodhpur, Rajasthan 342030, India.}
		\par\nopagebreak
		\noindent \textit{E-mail address}: \texttt{mandira@iitj.ac.in}
	\end{document}